\newtheorem{theorem}{Theorem}[section]
\newtheorem{corollary}[theorem]{Corollary}
\newtheorem{definition}[theorem]{Definition}
\newtheorem{lemma}[theorem]{Lemma}
\newtheorem{remark}[theorem]{Remark}
\begin{document}
\title[\textbf{On the unbounded of a class of Fourier integral operator on }$%
L^{2}\left( \mathbb{R}^{n}\right) $]{\textbf{On the unbounded of a class of
Fourier integral operator on }$L^{2}\left( \mathbb{R}^{n}\right) $ \textbf{\ 
}}
\author{\textbf{A. Senoussaoui }}
\address{Universit\'{e} d'Oran, Facult\'{e} des Sciences\\
D\'{e}partement de\ Math\'{e}matiques. B.P. 1524 El-Mnaouer, Oran, ALGERIA.}
\email{senoussaoui\_abdou@yahoo.fr, senoussaoui.abderahmane@univ-oran.dz}
\thanks{The paper was realized during the stay of the author at Universit%
\'{e} Libre de Bruxelles. We would like to thank Professor J.-P. Gossez for
severals discussions. }
\subjclass[2000]{Primary 35S30, 35S05 ; Secondary 47A10, 35P05}
\keywords{Fourier integral operators, amplitude, symbol and phase, $L^{2}$%
-boundedness.}

\begin{abstract}
In this paper, we give an example of Fourier integral operator with a symbol
belongs to $\bigcap\limits_{0<\rho <1}S_{\rho ,1}^{0}$ that cannot be
extended as a bounded operator on $L^{2}\left( \mathbb{R}^{n}\right) .$
\end{abstract}

\maketitle

\section{Introduction}

A Fourier integral operator is a singular integral operator of the form%
\begin{equation*}
I\left( a,\phi \right) u\left( x\right) =\dint \dint e^{i\phi \left(
x,y,\theta \right) }a\left( x,y,\theta \right) u\left( y\right) dyd\theta
\end{equation*}%
defined under certain assumptions on the regularity and asymptotic
properties of the phase function $\phi $ and the amplitude function $a.$
Here $\theta $ plays the role of the covariable.

Fourier integral operators are more general than pseudodifferantial
operators, where the phase function is of the form $\left\langle x-y,\theta
\right\rangle .$

Let us denote by $S_{\rho ,\delta }^{m}\left( \mathbb{R}^{n_{1}}\times 
\mathbb{R}^{n_{2}}\times \mathbb{R}^{N}\right) $ the space of \newline
$a\left( x,y,\theta \right) \in C^{\infty }\left( \mathbb{R}^{n_{1}}\times 
\mathbb{R}^{n_{2}}\times \mathbb{R}^{N}\right) ,$ satisfying%
\begin{equation*}
\left\vert \partial _{x}^{\alpha }\partial _{y}^{\beta }\partial _{\theta
}^{\gamma }a(x,y,\theta )\right\vert \leq C_{\alpha ,\beta ,\gamma }\lambda
^{m-\rho \left\vert \gamma \right\vert +\delta (\left\vert \alpha
\right\vert +\left\vert \beta \right\vert )}(\theta ),\text{ }\forall \left(
\alpha ,\beta ,\gamma \right) \in \mathbb{N}^{n_{1}}\times \mathbb{N}%
^{n_{2}}\times \mathbb{N}^{N},
\end{equation*}%
where $\lambda \left( \theta \right) =\left( 1+\left\vert \theta \right\vert
\right) .$

The phase function $\phi \left( x,y,\theta \right) $ is assumed to be a-$%
C^{\infty }\left( \mathbb{R}^{n_{1}}\times \mathbb{R}^{n_{2}}\times \mathbb{R%
}^{N},\mathbb{R}\right) $ real function, homogeneous in $\theta $ of degree $%
1.$

Since 1970, many efforts have been made by several authors in order to study
this type of operators (see, e.g., \cite{AsFu,Du,Ha,He,Ho1}).

For the Fourier integral operators, an interesting question is under which
conditions on $a$ and $\phi $ these operators are bounded on $L^{2}$ or on
the Sobolev spaces $H^{s}$.

It was proved in \cite{Ho3} that all pseudodifferential operators with
symbol in $S_{\rho ,\delta }^{0}$ are bounded on $L^{2}$ if $\delta <\rho .$
When $0<\delta =\rho <1,$ Calderon and Vaillancourt \cite{CaVa} \ have
proved that all pseudodifferantial operators with symbol in $S_{\rho ,\rho
}^{0}$ are bounded on $L^{2}.$ On the other hand, Kumano-Go \cite{Ko} has
given a pseudodifferential operator with symbol belonging to $%
\bigcap\limits_{0<\rho <1}S_{\rho ,1}^{0}$ which is not bounded on $%
L^{2}\left( \mathbb{R}\right) $.

For Fourier integral operators, it has been proved in \cite{AsFu} that the
operator \newline
$I\left( a,\phi \right) $ $:L^{2}\longrightarrow L^{2}$ is bounded if $%
\delta =m=\rho =0.$ Recently, M. Hasanov \cite{Ha} constructed a class of
unbounded Fourier integral operators on $L^{2}\left( \mathbb{R}\right) $
with an amplitude in $S_{1,1}^{0}.$

For $u\in C_{0}^{\infty }\left( \mathbb{R}^{n}\right) $, the integral
operators%
\begin{equation}
I\left( a,S\right) \varphi \left( x\right) =\int e^{iS\left( x,\theta
\right) }a\left( x,y,\theta \right) \mathcal{F}\varphi \left( \theta \right)
d\theta  \label{0.1}
\end{equation}%
appear naturally in the expression of the solutions of hyperbolic partial
differential equations (see \cite{EgSh1,EgSh2,MeSe}).

If we write formally the expression of the Fourier transformation $\mathcal{F%
}u\left( \theta \right) $ in $\left( \ref{0.1}\right) ,$ we obtain the
following Fourier integral operators%
\begin{equation}
I\left( a,S\right) u\left( x\right) =\iint e^{i\left( S\left( x,\theta
\right) -y\theta \right) }a\left( x,y,\theta \right) u\left( y\right)
dyd\theta  \label{0.2}
\end{equation}%
in which the phase function has the form $\phi \left( x,y,\theta \right)
=S\left( x,\theta \right) -y\theta $. We note that in \cite{MeSe2}, we have
studied the $L^{2}$-boundedness and $L^{2}$-compactness of a class of
Fourier integral operator of the form $\left( \ref{0.2}\right) .$

In this article we give an example of a Fourier integral operator, in higher
dimension, of the form $\left( \ref{0.1}\right) $ with symbol $a\left(
x,\theta \right) \in $ $\bigcap\limits_{0<\rho <1}S_{\rho ,1}^{0}$
independent on $y,$ that cannot be extended to a bounded operator in $%
L^{2}\left( \mathbb{R}^{n}\right) ,n\geq 1.$ Here we take the phase function
in the form of separate variable $S\left( x,\theta \right) =\varphi \left(
x\right) \psi \left( \theta \right) .$

\section{\textbf{The boundedness on }$C_{0}^{\infty }(\mathbb{R}^{n})$%
\textbf{\ and on }$D^{\prime }(\mathbb{R}^{n})$}

If $\varphi \in C_{0}^{\infty }\left( \mathbb{R}^{n}\right) ,$ we consider
the following integral transformations%
\begin{eqnarray}
\left( I\left( a,S\right) \varphi \right) \left( x\right) &=&\int_{\mathbb{R}%
^{N}}e^{iS\left( x,\theta \right) }a\left( x,\theta \right) \mathcal{F}%
\varphi \left( \theta \right) d\theta  \notag \\
&=&\int_{\mathbb{R}^{N}\times \mathbb{R}^{n}}e^{i\left( S\left( x,\theta
\right) -y\theta \right) }a\left( x,\theta \right) \varphi \left( y\right)
d\theta dy  \label{1.1}
\end{eqnarray}%
for $x\in \mathbb{R}^{n}$and $N\in \mathbb{N}$.

In general the integral $(\ref{1.1})$ is not absolutely convergent, so we
use the technique of the oscillatory integral developed by L.H\"{o}rmander
in \cite{Ho1}. The phase function $S$ and the amplitude $a$ are assumed to
satisfy the hypothesis

$(H1)$ $\ \ \ S\in C^{\infty }\left( \mathbb{R}_{x}^{n}\times \mathbb{R}%
_{\theta }^{N},\mathbb{R}\right) $ ($S$ real function)

$(H2)\;\;$ $\forall \beta \in \mathbb{N}^{N},$ $\exists C_{\beta }>0;$ 
\begin{equation*}
\left\vert \partial _{\theta }^{\beta }S\left( x,\theta \right) \right\vert
\leq C_{\beta }\left( x\right) \lambda ^{\left( 1-\left\vert \beta
\right\vert \right) _{+}}\left( \theta \right) \text{ , }\forall \left(
x,\theta \right) \in \mathbb{R}_{x}^{n}\times \mathbb{R}_{\theta }^{N}
\end{equation*}%
where $\lambda \left( \theta \right) =\left( 1+\left\vert \theta \right\vert
\right) $ and $\left( 1-\left\vert \beta \right\vert \right) _{+}=\max
\left( 1-\left\vert \beta \right\vert ,0\right) .$

$(H3)\;\;$ $S$ satisfies 
\begin{equation*}
\left( \frac{\partial S}{\partial x},\frac{\partial S}{\partial \theta }%
-y\right) \neq 0,\text{ }\forall \left( x,\theta \right) \in \mathbb{R}%
_{x}^{n}\times \left( \mathbb{R}_{\theta }^{N}\backslash \left\{ 0\right\}
\right) .
\end{equation*}

\begin{remark}
If the phase function $S\left( x,\theta \right) $ is homogeneous in $\theta $
of degree 1, then it satisfies $(H2).$
\end{remark}

For any open $\Omega $ of $\mathbb{R}_{x}^{n}\times \mathbb{R}_{\theta
}^{N}, $ $m\in \mathbb{R}$, $\rho >0$ and $\delta \geq 0$ we set

\begin{equation*}
S_{\rho ,\delta }^{m}\left( \Omega \right) =\left\{ 
\begin{array}{ccc}
a\in C^{\infty }\left( \Omega \right) ;\text{ } & \forall (\alpha ,\beta
)\in \mathbb{N}^{n}\times \mathbb{N}^{N}, & \exists C_{\alpha ,\beta }>0; \\ 
\text{ }\left\vert \partial _{x}^{\alpha }\partial _{\theta }^{\beta
}a(x,\theta )\right\vert \leq & C_{\alpha ,\beta }\lambda ^{m-\rho
\left\vert \beta \right\vert +\delta \left\vert \alpha \right\vert }(\theta
). & 
\end{array}%
\right\}
\end{equation*}

\begin{theorem}
\textit{If }$S$ \textit{satisfies }$(H1),(H2),(H3)$\textit{\ and if }$a\in
S_{\rho ,\delta }^{m}\left( \mathbb{R}_{x}^{n}\times \mathbb{R}_{\theta
}^{N}\right) ,$\textit{\ then }$I\left( a,\phi \right) $\textit{\ }is a
continuous operator from $C_{0}^{\infty }\left( \mathbb{R}^{n}\right) $ to $%
C^{\infty }\left( \mathbb{R}^{n}\right) $ and from $\mathcal{E}^{\prime
}\left( \mathbb{R}^{n}\right) $ to $D^{\prime }\left( \mathbb{R}^{n}\right)
, $ where $\rho >0$ and $\delta <1.$
\end{theorem}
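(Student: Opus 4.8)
The plan is to prove the two continuity statements separately, deducing the distributional one from the first by transposition. For $I(a,S)\colon C_0^\infty(\mathbb{R}^n)\to C^\infty(\mathbb{R}^n)$ I would use the first representation in $(\ref{1.1})$. If $\varphi\in C_0^\infty(\mathbb{R}^n)$ then $\mathcal{F}\varphi\in\mathcal{S}(\mathbb{R}^N)$ decays faster than any power of $\lambda(\theta)$, while $a\in S_{\rho,\delta}^m$ and, by $(H2)$, the phase and all its derivatives grow at most polynomially in $\theta$, locally uniformly in $x$. Hence for $x$ in a fixed compact set $K$ and any multi-index $\alpha$, the function $\partial_x^\alpha\!\big(e^{iS(x,\theta)}a(x,\theta)\big)$ is bounded by $C_{K,\alpha}\lambda^{M_\alpha}(\theta)$ for some $M_\alpha$, and multiplying by $|\mathcal{F}\varphi(\theta)|$ produces an integrable majorant independent of $x\in K$. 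Differentiation under the integral sign then shows $I(a,S)\varphi\in C^\infty(\mathbb{R}^n)$.

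Continuity follows from the same estimate. For each $K$ and $\alpha$,
\[
\sup_{x\in K}\big|\partial_x^\alpha I(a,S)\varphi(x)\big|\le C_{K,\alpha}\int \lambda^{M_\alpha}(\theta)\,|\mathcal{F}\varphi(\theta)|\,d\theta\le C'_{K,\alpha}\sup_{\theta}\Big(\lambda^{M_\alpha+N+1}(\theta)\,|\mathcal{F}\varphi(\theta)|\Big),
\]
and the right-hand side is dominated by a Schwartz seminorm of $\mathcal{F}\varphi$, hence by a $C_0^\infty$-seminorm of $\varphi$ through the continuity of the Fourier transform. Thus every $C^\infty$-seminorm of $I(a,S)\varphi$ is controlled by a seminorm of $\varphi$.

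For $I(a,S)\colon\mathcal{E}'(\mathbb{R}^n)\to D'(\mathbb{R}^n)$ I would argue by duality, defining $\langle I(a,S)u,\psi\rangle:=\langle u,{}^tI(a,S)\psi\rangle$ for $u\in\mathcal{E}'$ and $\psi\in C_0^\infty$, where Fubini and Fourier inversion identify the transpose as the oscillatory integral
\[
{}^tI(a,S)\psi(y)=\iint e^{i\left(S(x,\theta)-y\theta\right)}a(x,\theta)\psi(x)\,dx\,d\theta .
\]
This integral is not absolutely convergent; I would define it as an oscillatory integral in the sense of H\"{o}rmander (inserting a cutoff $\chi(\varepsilon\theta)$ and letting $\varepsilon\to0$) and regularize by integration by parts. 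On $\theta\neq0$, $(H3)$ guarantees that $(\nabla_x S,\ \nabla_\theta S-y)\neq0$, so with $\Delta=|\nabla_x S|^2+|\nabla_\theta S-y|^2>0$ the first-order operator
\[
L=\frac{1}{i\Delta}\Big(\langle\nabla_x S,\nabla_x\rangle+\langle\nabla_\theta S-y,\nabla_\theta\rangle\Big)
\]
satisfies $L\,e^{i(S-y\theta)}=e^{i(S-y\theta)}$, while the compact region near $\theta=0$ is absorbed by the cutoff.

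The decisive point, which I expect to be the main obstacle, is to show that the transpose ${}^tL$ lowers the order of the amplitude by a fixed positive amount $\mu=\min(\rho,1-\delta)>0$, i.e. that it sends $S_{\rho,\delta}^{m'}$ into $S_{\rho,\delta}^{m'-\mu}$. Differentiating $a$ in $\theta$ gains a factor $\lambda^{-\rho}$, whereas differentiating in $x$ costs $\lambda^{\delta}$, and the hypotheses $\rho>0$, $\delta<1$, together with symbol estimates for the coefficients $\nabla_x S/\Delta$ and $(\nabla_\theta S-y)/\Delta$, are precisely what yield a net gain. These coefficient estimates need a uniform lower bound for $\Delta$ and control of its derivatives from $(H2)$ and $(H3)$; the delicate case is that of the directions on which $\nabla_x S$ degenerates, where $\Delta$ ceases to be elliptic of order $2$. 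Granting this, iterating the integration by parts $k$ times with $k\mu>m+N$ makes the integral absolutely convergent, and the same argument applied after differentiating in $y$ shows ${}^tI(a,S)\psi\in C^\infty$ with seminorms controlled by those of $\psi$. Consequently ${}^tI(a,S)\colon C_0^\infty\to C^\infty$ is continuous, and by duality $I(a,S)$ extends to a continuous operator $\mathcal{E}'\to D'$ agreeing on $C_0^\infty$ with the operator of the first part.
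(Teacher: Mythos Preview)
The paper does not prove this theorem itself; its entire proof is a citation to \cite{Ha} and \cite[pp.~50--51]{EgSh1}. Your sketch is precisely the standard argument one finds in those references: the first representation in $(\ref{1.1})$ together with the rapid decay of $\mathcal{F}\varphi$ handles the $C_0^\infty\to C^\infty$ continuity directly, and the $\mathcal{E}'\to D'$ statement comes from regularizing the oscillatory kernel by integration by parts against an operator $L$ built from the non-degeneracy condition $(H3)$. So in substance your approach and the paper's (cited) approach coincide.

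One technical caveat. Hypothesis $(H2)$ bounds only $\partial_\theta^\beta S$; it says nothing about the growth in $\theta$ of $\partial_x S$ or its higher $x$-derivatives. Consequently the symbol estimates you need for the coefficients $\nabla_x S/\Delta$ and for $\partial_x^\alpha\Delta$ do not follow from $(H2)$ and $(H3)$ alone in the form you wrote $L$. In the standard treatment (H\"ormander, Egorov--Shubin) the phase is homogeneous of degree $1$ in $\theta$, so $\nabla_x S$ is of order $1$ and one rescales $L$ by suitable powers of $|\theta|$ to make its coefficients genuine symbols of order $-1$; only then does ${}^tL$ lower the order by $\min(\rho,1-\delta)$ as you claim. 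You correctly flag this step as the main obstacle, but you should be aware that under the hypotheses exactly as stated in this paper the argument needs either an extra assumption on $\partial_x^\alpha S$ (implicit in the cited references, where the phase is homogeneous) or a more careful choice of $L$.
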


\begin{proof}
See \cite{Ha}, \cite[pages 50-51]{EgSh1}.
\end{proof}

\begin{corollary}
\textit{Let }$\varphi \left( x\right) ,\psi \left( \theta \right) \in
C^{\infty }\left( \mathbb{R}^{n},\mathbb{R}\right) $\textit{\ two functions, 
}$\psi \left( \theta \right) $ \textit{is homogeneous of degree }$1$ $(\psi
\left( \theta \right) \neq 0)$\textit{\ and }$\varphi \left( x\right) $
satisfies%
\begin{equation}
\varphi ^{\prime }\left( x\right) \neq 0,\text{ }\forall x\in \mathbb{R}^{n}.
\label{1.2}
\end{equation}%
\textit{Then the operator}%
\begin{equation}
\left( Fu\right) \left( x\right) =\int_{\mathbb{R}^{n}}e^{i\varphi \left(
x\right) \psi \left( \theta \right) }a\left( x,\theta \right) \mathcal{F}%
u\left( \theta \right) d\theta ,\text{ \ }u\in \mathcal{S}(\mathbb{R}^{n})
\label{1.3}
\end{equation}%
\textit{maps continuously} $C_{0}^{\infty }\left( \mathbb{R}^{n}\right) $ to 
$C^{\infty }\left( \mathbb{R}^{n}\right) $ \textit{and} $\mathcal{E}^{\prime
}\left( \mathbb{R}^{n}\right) $ to $D^{\prime }\left( \mathbb{R}^{n}\right) $
\textit{for every} $a\in S_{\rho ,\delta }^{m}\left( \mathbb{R}_{x}\times 
\mathbb{R}_{\theta }\right) ,$ where $\rho >0$ and $\delta <1.$
\end{corollary}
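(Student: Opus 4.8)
The plan is to recognize this corollary as a direct specialization of the preceding theorem. I would take $N=n$, set $S(x,\theta)=\varphi(x)\psi(\theta)$, observe that the operator $F$ in $\left(\ref{1.3}\right)$ is precisely $I(a,S)$ with an amplitude $a(x,\theta)$ independent of $y$, and then verify that this separated-variable phase $S$ satisfies the three hypotheses $(H1)$, $(H2)$, $(H3)$. Once these are in place, the conclusion follows immediately by invoking the theorem.

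First, $(H1)$ is immediate: since $\varphi\in C^{\infty}(\mathbb{R}^{n},\mathbb{R})$ and $\psi\in C^{\infty}(\mathbb{R}^{n},\mathbb{R})$, their product $S(x,\theta)=\varphi(x)\psi(\theta)$ is a real-valued $C^{\infty}$ function on $\mathbb{R}_{x}^{n}\times\mathbb{R}_{\theta}^{n}$. For $(H2)$, I would note that $S$ is homogeneous of degree $1$ in $\theta$ because $\psi$ is, so the Remark above applies and gives $(H2)$ at once. Concretely, $\partial_{\theta}^{\beta}S(x,\theta)=\varphi(x)\,\partial_{\theta}^{\beta}\psi(\theta)$, and the homogeneity of degree $1$ of $\psi$ yields $|\partial_{\theta}^{\beta}\psi(\theta)|\leq C_{\beta}\,\lambda^{(1-|\beta|)_{+}}(\theta)$, so the estimate holds with the admissible $x$-dependent constant $C_{\beta}(x)=|\varphi(x)|\,C_{\beta}$.

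The key verification is $(H3)$. Here I would compute the first block of the gradient, $\partial_{x}S(x,\theta)=\psi(\theta)\,\varphi^{\prime}(x)$. For $\theta\neq 0$ the hypothesis $\psi(\theta)\neq 0$ gives a nonzero scalar, while $\left(\ref{1.2}\right)$ gives $\varphi^{\prime}(x)\neq 0$; hence the vector $\partial_{x}S(x,\theta)=\psi(\theta)\,\varphi^{\prime}(x)$ is nonzero. Since the first component of the pair $(\partial_{x}S,\;\partial_{\theta}S-y)$ is already nonzero, the whole vector is nonzero for every $(x,\theta)\in\mathbb{R}_{x}^{n}\times(\mathbb{R}_{\theta}^{n}\setminus\{0\})$ and for every $y$, which is exactly $(H3)$.

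Finally, with $(H1)$, $(H2)$, $(H3)$ confirmed and $a\in S_{\rho,\delta}^{m}$ with $\rho>0$ and $\delta<1$, the theorem applies to $I(a,S)$ and yields continuity from $C_{0}^{\infty}(\mathbb{R}^{n})$ to $C^{\infty}(\mathbb{R}^{n})$ and from $\mathcal{E}^{\prime}(\mathbb{R}^{n})$ to $D^{\prime}(\mathbb{R}^{n})$; since $F=I(a,S)$, this proves the corollary. I do not anticipate a genuine obstacle, as this is essentially a bookkeeping argument. The only point requiring care is $(H3)$, where one must use both $\varphi^{\prime}\neq 0$ and $\psi\neq 0$ simultaneously and observe that $y$ is irrelevant because the $x$-component of the gradient alone already certifies the non-degeneracy.
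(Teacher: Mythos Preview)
Your proposal is correct and follows exactly the approach of the paper: set $S(x,\theta)=\varphi(x)\psi(\theta)$, verify that $(H1)$, $(H2)$, $(H3)$ hold for this separated-variable phase, and then invoke the preceding theorem. The paper's own proof is simply the one-line assertion that these three hypotheses are satisfied; your write-up merely supplies the explicit verifications (in particular the clean observation for $(H3)$ that $\partial_{x}S=\psi(\theta)\varphi'(x)\neq 0$ already suffices), so the two proofs coincide in substance.
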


\begin{proof}
For the phase function $S(x,\theta )=\varphi \left( x\right) \psi \left(
\theta \right) ,$ $(H1),$ $(H2)$ and $(H3)$ are satisfied.
\end{proof}

\section{\textbf{The unboundedness of the operator }$F$\textbf{\ on }$%
L^{2}\left( \mathbb{R}^{n}\right) $\textbf{\ }}

In this section we shall construct a symbol $a\left( x,\theta \right) $ in
the H\"{o}rmander space $\bigcap\limits_{0<\rho <1}S_{\rho ,1}^{0}\left( 
\mathbb{R}_{x}^{n}\times \mathbb{R}_{\theta }^{n}\right) ,$ such that the
Fourier integral operator $F$ can not be extended as a bounded operator in $%
L^{2}\left( \mathbb{R}^{n}\right) .$

\setcounter{equation}{0}

\begin{lemma}
\label{LemmaKumano}(\textit{Kumano-Go \cite{Ko}). Let }$f_{0}\left( t\right) 
$\textit{\ be a continuous function on }$\left[ 0,1\right] $\textit{\ such
that}%
\begin{equation}
f_{0}\left( 0\right) =0,\text{ }f_{0}\left( t\right) >0\text{\textit{\ in} }%
\left] 0,1\right] .  \label{2.1}
\end{equation}%
\textit{Then, there exists a continuous function }$b\left( t\right) $\textit{%
\ on }$\left[ 0,1\right] $\textit{\ such that }$b\left( t\right) $\textit{\
satisfies the conditions}%
\begin{equation}
\left\{ 
\begin{array}{l}
f_{0}\left( t\right) \leq b\left( t\right) \text{ \ \textit{on} }\left[ 0,1%
\right] , \\ 
b\in C^{\infty }\left( \left] 0,1\right] \right) ,\text{ }b\left( 0\right)
=0,\text{ }b^{\prime }\left( t\right) >0\text{ \textit{in }}\left] 0,1\right]
, \\ 
\left\vert b^{\left( n\right) }\left( t\right) \right\vert \leq C_{n}t^{-n}%
\text{ \textit{in} }\left] 0,1\right] ,\text{ }n\in \mathbb{N}^{\ast },\text{
}C_{n}>0.%
\end{array}%
\right.  \label{C}
\end{equation}
\end{lemma}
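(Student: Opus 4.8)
The plan is to reduce the unusual derivative bound $|b^{(n)}(t)| \le C_n t^{-n}$ to an ordinary statement about \emph{bounded} derivatives by passing to the logarithmic variable $s = \log t$, and to secure the domination $f_0 \le b$ together with strict monotonicity by mollifying a monotone majorant of $f_0$ with a one-sided kernel.

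First I would dispose of the irregularity of $f_0$. Since $f_0$ is continuous on $[0,1]$, the running maximum $M(t) = \max_{0\le \tau \le t} f_0(\tau)$ is a continuous, non-decreasing function with $M(0)=0$ and $M(t) \ge f_0(t) > 0$ for $t \in (0,1]$. Extending $M$ to $[0,\infty)$ by the constant $M(1)$ beyond $t=1$, I set $g(s) = M(e^s)$ for $s \in \mathbb{R}$. Then $g$ is continuous, non-decreasing, bounded by $M(1)$, tends to $0$ as $s \to -\infty$, and satisfies $g(s) \ge f_0(e^s)$. Next, fixing a kernel $\chi \in C_0^\infty([0,1])$ with $\chi \ge 0$ and $\int_0^1 \chi = 1$, I define the \emph{forward} average $h(s) = \int_0^1 g(s+u)\chi(u)\,du$. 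Because $g$ is non-decreasing, $g(s+u)\ge g(s)$ for $u \ge 0$ gives both $h \ge g$ and monotonicity of $h$, while dominated convergence gives $h(s)\to 0$ as $s\to-\infty$. Writing $h^{(n)}(s) = (-1)^n\int g(v)\chi^{(n)}(v-s)\,dv$ and using $|g|\le M(1)$ yields $\sup_s|h^{(n)}(s)| \le M(1)\,\|\chi^{(n)}\|_{L^1} < \infty$, so \emph{all} derivatives of $h$ are bounded.

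To upgrade non-decreasing to strictly increasing I would set $\tilde b(s) = h(s) + e^{s}$. On $(-\infty,0]$ the added term is smooth with all derivatives bounded by $1$, is strictly increasing, and tends to $0$; hence $\tilde b$ has bounded derivatives, satisfies $\tilde b(s)\to 0$, $\tilde b'(s) = h'(s) + e^s > 0$, and $\tilde b(s) \ge g(s) \ge f_0(e^s)$. Finally I transport back by $b(t) = \tilde b(\log t)$ for $t \in (0,1]$ and $b(0) = 0$. Continuity at $0$ and $b(0)=0$ follow from $\tilde b(s)\to 0$; smoothness on $(0,1]$ and $b'(t) = t^{-1}\tilde b'(\log t) > 0$ are immediate; and $b(t) = \tilde b(\log t) \ge f_0(t)$ gives the domination on $[0,1]$.

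The derivative estimate is the one place requiring the key identity. With the Euler operator $D = t\,d/dt$ one has $t^n\, d^n/dt^n = D(D-1)\cdots(D-n+1)$ and $D^k b(t) = \tilde b^{(k)}(\log t)$, so $t^n b^{(n)}(t)$ is a fixed linear combination of the bounded functions $\tilde b^{(k)}(\log t)$, $k\le n$; hence $|b^{(n)}(t)| \le C_n t^{-n}$. I expect the main obstacle to be precisely this equivalence --- recognizing that the scale-invariant bound $|b^{(n)}|\le C_n t^{-n}$ is nothing but boundedness of all derivatives in the variable $s=\log t$ --- after which the construction is routine; the only care needed earlier is to use a one-sided (forward) mollifier, so that averaging a monotone function preserves the inequality $h \ge g$ in the correct direction.
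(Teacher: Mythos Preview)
The paper does not actually prove this lemma: it is stated with attribution to Kumano-Go \cite{Ko} and then used as a black box in the subsequent constructions. There is therefore no proof in the paper to compare your proposal against.

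Your argument is correct and self-contained. The reduction via $s=\log t$ is exactly the right observation: through the identity $t^{n}\,d^{n}/dt^{n}=D(D-1)\cdots(D-n+1)$ with $D=t\,d/dt$, and the relation $D^{k}b(t)=\tilde b^{(k)}(\log t)$, the scale-invariant bound $|b^{(n)}(t)|\le C_{n}t^{-n}$ on $(0,1]$ is equivalent to boundedness of all $s$-derivatives of $\tilde b$ on $(-\infty,0]$. Once this is seen, your construction---running maximum of $f_{0}$, passage to the $s$-variable, mollification by a kernel supported in $[0,1]$ so that the inequality $h\ge g$ is preserved, and the addition of $e^{s}$ to force $\tilde b'>0$---delivers all the required properties; in particular $h$ is smooth with bounded derivatives because $g$ is bounded and the derivatives fall on the compactly supported $\chi$, and $h'\ge 0$ follows from monotonicity of $h$, so $\tilde b'=h'+e^{s}>0$. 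The only cosmetic remark is that $\chi\in C_{0}^{\infty}(\mathbb{R})$ with $\mathrm{supp}\,\chi\subset[0,1]$ is what you mean (so differentiation under the integral and the one-sided averaging both make sense), which is clearly your intent.
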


\begin{definition}
It is obvious that an operator $A$ is extended as a bounded operator in $%
L^{2}\left( \mathbb{R}^{n}\right) $ if and only if there exists a constant $%
C>0$ such that%
\begin{equation}
\left\Vert Au\right\Vert _{L^{2}}\leq C\left\Vert u\right\Vert _{L^{2}}\text{
for any }u\in \mathcal{S}(\mathbb{R}^{n}).  \label{2.4}
\end{equation}
\end{definition}

\begin{theorem}
\label{Theorem1}\textit{Let A be an operator, given at least for }$x\in %
\left] 0,\beta \right[ ^{n}$\textit{\ (}$\beta <1$)$,$\textit{\ by}%
\begin{equation*}
\left( Au\right) \left( x\right) =\int_{\mathbb{R}^{n}}u\left( g\left(
x\right) z\right) \mathcal{F}\Psi \left( z\right) dz,\text{ \ }u\in \mathcal{%
S}(\mathbb{R}^{n}),
\end{equation*}%
we denote here by $\left] 0,\beta \right[ ^{n}=\prod\limits_{j=1}^{n}\left]
0,\beta \right[ $.

\textit{If }$\Psi \in \mathcal{S}(\mathbb{R}^{n}),$ $\Psi \left( 0\right) =1$
\textit{and the function }$g$ $\mathit{\in C}^{0}\left( \left] 0,\beta %
\right[ ^{n},\mathbb{R}_{+}\right) $\textit{\ satisfies}%
\begin{equation}
\left\{ 
\begin{array}{l}
\lim\limits_{\left\vert x\right\vert \rightarrow 0^{+}}\frac{g\left(
x\right) }{\left\vert x\right\vert }=0, \\ 
\forall i\in \left\{ 1,...,n\right\} ;\text{ }x_{i}\longrightarrow g\left(
x_{1},..,x_{i},...,x_{n}\right) \text{ \textit{is increasing on }}\left]
0,\beta \right[ \text{.}%
\end{array}%
\right. \text{ }  \label{2.6}
\end{equation}%
\textit{Then the operator }$A$\textit{\ cannot be extended to a bounded
operator in }$L^{2}\left( \mathbb{R}^{n}\right) .$
\end{theorem}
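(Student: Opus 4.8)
The plan is to argue by contradiction through the boundedness criterion of the Definition: if $A$ were bounded there would be a constant $C>0$ with $\left\Vert Au\right\Vert _{L^{2}}\leq C\left\Vert u\right\Vert _{L^{2}}$ for every $u\in \mathcal{S}(\mathbb{R}^{n})$, so it suffices to exhibit a family $\{u_{\varepsilon }\}\subset \mathcal{S}(\mathbb{R}^{n})$ with $\left\Vert u_{\varepsilon }\right\Vert _{L^{2}}$ bounded and $\left\Vert Au_{\varepsilon }\right\Vert _{L^{2}}\rightarrow \infty $. The mechanism behind the construction is that, because $g(x)\rightarrow 0$ as $x\rightarrow 0^{+}$, the operator $A$ acts near the origin like the functional $u\mapsto c_{\Psi }\,u(0)$, where $c_{\Psi }=\int_{\mathbb{R}^{n}}\mathcal{F}\Psi (z)\,dz$; since $\Psi (0)=1$ one has $c_{\Psi }\neq 0$, and evaluation at a point is not continuous on $L^{2}$. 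The whole point is to turn this heuristic into a quantitative lower bound.

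First I would fix a real $\chi \in C_{0}^{\infty }(\mathbb{R}^{n})$ with $\chi (0)=1$ and set $u_{\varepsilon }(y)=\varepsilon ^{-n/2}\chi (y/\varepsilon )$, so that $\left\Vert u_{\varepsilon }\right\Vert _{L^{2}}=\left\Vert \chi \right\Vert _{L^{2}}$ is independent of $\varepsilon $ while $u_{\varepsilon }(0)=\varepsilon ^{-n/2}$. Substituting into the definition of $A$ gives, for $x\in \,]0,\beta \lbrack ^{n}$,
\begin{equation*}
(Au_{\varepsilon })(x)=\varepsilon ^{-n/2}\int_{\mathbb{R}^{n}}\chi \!\left( \tfrac{g(x)}{\varepsilon }z\right) \mathcal{F}\Psi (z)\,dz .
\end{equation*}
On the sublevel set $E_{\varepsilon }=\{x\in \,]0,\beta \lbrack ^{n}:g(x)\leq \delta \varepsilon \}$ I would split $\chi (\tfrac{g(x)}{\varepsilon }z)=\chi (0)+[\chi (\tfrac{g(x)}{\varepsilon }z)-\chi (0)]$ and estimate the remainder by the mean value theorem, $|\chi (\tfrac{g(x)}{\varepsilon }z)-\chi (0)|\leq \left\Vert \nabla \chi \right\Vert _{\infty }\,\delta \,|z|$; since $\mathcal{F}\Psi \in \mathcal{S}(\mathbb{R}^{n})$ the weight $\int |z|\,|\mathcal{F}\Psi (z)|\,dz$ is finite, so the remainder is $O(\delta )$ uniformly in $x\in E_{\varepsilon }$. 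Choosing $\delta $ small enough (once and for all), the main term $c_{\Psi }$ dominates and $|(Au_{\varepsilon })(x)|\geq c_{0}\varepsilon ^{-n/2}$ on $E_{\varepsilon }$ for some $c_{0}>0$.

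The decisive step is the measure estimate for $E_{\varepsilon }$, and this is where the hypothesis $\lim_{|x|\rightarrow 0^{+}}g(x)/|x|=0$ is used. Writing $M(\varepsilon )=|E_{\varepsilon }|$ and rescaling $x=\varepsilon y$ gives
\begin{equation*}
\varepsilon ^{-n}M(\varepsilon )=\int_{\,]0,\beta /\varepsilon \lbrack ^{n}}\mathbf{1}\!\left[ \,g(\varepsilon y)\leq \delta \varepsilon \,\right] dy .
\end{equation*}
For each fixed $y$ with positive coordinates one has $g(\varepsilon y)/\varepsilon =\bigl( g(\varepsilon y)/|\varepsilon y|\bigr) \,|y|\rightarrow 0$ as $\varepsilon \rightarrow 0^{+}$, because $\varepsilon y\rightarrow 0$; hence the integrand tends to $1$ pointwise on $\mathbb{R}_{+}^{n}$, and Fatou's lemma yields $\varepsilon ^{-n}M(\varepsilon )\rightarrow +\infty $. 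Combining with the previous paragraph, and noting $E_{\varepsilon }\subseteq \,]0,\beta \lbrack ^{n}$ where the formula for $A$ is available, $\left\Vert Au_{\varepsilon }\right\Vert _{L^{2}}^{2}\geq c_{0}^{2}\,\varepsilon ^{-n}M(\varepsilon )\rightarrow \infty $ while $\left\Vert u_{\varepsilon }\right\Vert _{L^{2}}$ stays bounded, which contradicts the boundedness inequality and proves the theorem. Here the increasing property of $g$ is only needed to guarantee that $E_{\varepsilon }$ is a genuine measurable lower set on which $g>0$ away from the origin, so that the rescaling is legitimate; one could also use it to replace Fatou by an elementary computation bounding $E_{\varepsilon }$ from below by a box.

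I expect the main obstacle to be precisely this measure estimate: one must quantify that $g$ vanishes strictly faster than linearly, so that the region where $A$ is close to the (unbounded) evaluation functional $u\mapsto c_{\Psi }u(0)$ has volume of order $\gg \varepsilon ^{n}$, thereby overwhelming the $\varepsilon ^{-n}$ coming from $|u_{\varepsilon }(0)|^{2}$. The pointwise estimate for $Au_{\varepsilon }$ is routine once $\delta $ is fixed; the balance between the height $\varepsilon ^{-n/2}$ and the volume $M(\varepsilon )$ is the crux.
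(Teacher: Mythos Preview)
Your argument is correct and reaches the same contradiction as the paper, but the execution of the key measure estimate is different. The paper also tests against functions concentrating at the origin, taking $u_{\varepsilon}$ to be (an $L^{2}$-approximation of) the characteristic function of $[-\varepsilon,\varepsilon]^{n}$; rather than invoking Fatou, it uses the coordinatewise monotonicity of $g$ in an essential way: for each integer $p$ it selects $\varepsilon_{p}>0$ with $p\varepsilon_{p}\leq\beta$ and $\varepsilon_{p}/g(p\varepsilon_{p},\dots,p\varepsilon_{p})\geq N_{0}$ (possible since $g(x)/|x|\to0$), and then monotonicity forces $\varepsilon_{p}/g(x)\geq N_{0}$ on the entire box $]0,p\varepsilon_{p}]^{n}$, giving $\|Au_{\varepsilon_{p}}\|_{L^{2}}^{2}\geq\beta^{2}(p\varepsilon_{p})^{n}$ against $\|u_{\varepsilon_{p}}\|_{L^{2}}^{2}\asymp\varepsilon_{p}^{n}$, with $p$ arbitrary. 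Your Fatou-based estimate of $|E_{\varepsilon}|$ is slicker and, as you observe, does not actually need the monotonicity hypothesis (measurability of $E_{\varepsilon}$ already follows from continuity of $g$, so your hedge there is unnecessary); the paper's box argument is more elementary and makes the ratio $\|Au\|/\|u\|\gtrsim p^{n/2}$ completely explicit. One cosmetic point: your identity $c_{\Psi}=\int\mathcal{F}\Psi(z)\,dz\neq0$ is exactly Fourier inversion applied to $\Psi(0)=1$, which is how the paper obtains its lower bound on the truncated integrals as well.
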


\begin{proof}
Using the Fourier inversion formula in $\mathcal{S}(\mathbb{R}^{n})$, we have%
\begin{equation*}
\left( Au\right) \left( x\right) =\int_{\mathbb{R}^{n}}u\left( g\left(
x\right) z\right) \mathcal{F}\Psi \left( z\right) dz,\text{ \ }u\in \mathcal{%
S}(\mathbb{R}^{n}).
\end{equation*}%
Then there exists a constant $N_{0}>0$ such that%
\begin{equation}
\left\vert \left( 2\pi \right) ^{-\frac{n}{2}}\int_{\left[ -N,N\right] ^{n}}%
\mathcal{F}\Psi \left( z\right) dz\right\vert \geq \beta \text{ \ \ for any }%
N\geq N_{0}\text{.}  \label{2.7}
\end{equation}%
Setting for $\varepsilon >0$ 
\begin{equation*}
u_{\varepsilon }\left( z\right) =\left\{ 
\begin{array}{l}
\left( 2\pi \right) ^{-\frac{n}{2}},\text{ for }z\in \left[ -\varepsilon
,\varepsilon \right] ^{n} \\ 
0,\text{ \ \ for }z\notin \left[ -\varepsilon ,\varepsilon \right] ^{n}%
\end{array}%
\right. .
\end{equation*}%
Then, using the density of $\mathcal{S}(\mathbb{R}^{n})$ in $L^{2}(\mathbb{R}%
^{n}),$ we see that $\left( Au_{\varepsilon }\right) \left( x\right) $ must
be%
\begin{equation}
\left( Au_{\varepsilon }\right) \left( x\right) =\left( 2\pi \right) ^{-%
\frac{n}{2}}\int_{\left[ -\frac{\varepsilon }{g\left( x\right) },\frac{%
\varepsilon }{g\left( x\right) }\right] ^{n}}\mathcal{F}\Psi \left( z\right)
dz\text{ \ for }x\in \left] 0,\beta \right[ ^{n}.  \label{2.8}
\end{equation}%
By $\left( \ref{2.6}\right) $ for any $p\in \mathbb{N}^{\ast }$ there exists
a small $\varepsilon _{p}\geq 0$ such that%
\begin{equation*}
\frac{\varepsilon _{p}}{g\left( p\varepsilon _{p},...,p\varepsilon
_{p}\right) }\geq N_{0}\text{ and }p\varepsilon _{p}\leq \beta .
\end{equation*}%
It follows from the condition $\left( \ref{2.6}\right) $ that%
\begin{equation*}
\frac{\varepsilon _{p}}{g\left( x\right) }\geq \frac{\varepsilon _{p}}{%
g\left( p\varepsilon _{p},...,p\varepsilon _{p}\right) }\geq N_{0},\text{ \
holds for }x\in \left] 0,p\varepsilon _{p}\right] ^{n},
\end{equation*}%
so that, using $\left( \ref{2.7}\right) $ and $\left( \ref{2.8}\right) ,$ we
have%
\begin{equation}
\left\Vert Au_{\varepsilon _{p}}\right\Vert _{L^{2}}^{2}=\int_{\mathbb{R}%
^{n}}\left\vert Au_{\varepsilon _{p}}\left( x\right) \right\vert ^{2}dx\geq
\int_{\left[ 0,p\varepsilon _{p}\right] ^{n}}\left\vert Au_{\varepsilon
_{p}}\left( x\right) \right\vert ^{2}dx\geq \beta ^{2}\left( p\varepsilon
_{p}\right) ^{n}.  \label{2.9}
\end{equation}%
Assume that $A$ is bounded on $L^{2}(\mathbb{R}^{n}).$ According to $\left( %
\ref{2.4}\right) $ there exists $C>0$ such that: 
\begin{equation*}
\beta ^{2}\left( p\varepsilon _{p}\right) ^{n}\leq \left\Vert
Au_{\varepsilon _{p}}\right\Vert _{L^{2}}^{2}\leq C^{2}\left( 2\varepsilon
_{p}\right) ^{n}\text{ for any }p.
\end{equation*}%
Which is a contradiction$.$
\end{proof}

Let $K\left( t\right) $ be a function from $\mathcal{S}(\mathbb{R})$ such
that $K\left( t\right) =1$ on $\left[ -\delta ,\delta \right] $ ($\delta <1$%
), $b\left( t\right) \in C^{0}\left( \left[ 0,1\right] \right) $ be a
continuous function satisfying conditions $\left( \ref{C}\right) $ and $%
\varphi \left( x\right) ,\psi \left( \theta \right) \in C^{\infty }\left( 
\mathbb{R}^{n},\mathbb{R}\right) $ with $\psi \left( \theta \right) $
homogeneous of degree $1$ $(\psi \left( \theta \right) \neq 0).$ We assume
that $\varphi \left( x\right) $ satisfies 
\begin{equation}
\left\vert \varphi \left( x\right) \right\vert \leq C\left\vert x\right\vert 
\text{ for }\left\vert x\right\vert \leq 1.  \label{Es}
\end{equation}%
We remark that if the function $\varphi \left( x\right) $ is homogeneous of
degree $1,$ then it satisfies $\left( \ref{Es}\right) .$

For $x=\left( x_{1},...,x_{n}\right) ,\theta =\left( \theta _{1},...\theta
_{n}\right) \in \mathbb{R}^{n},$ set%
\begin{equation*}
q\left( x,\theta \right) =e^{-i\varphi \left( x\right) \psi \left( \theta
\right) }\prod\limits_{j=1}^{n}K\left( b\left( \left\vert x\right\vert
\right) \left\vert x\right\vert \theta _{j}\right)
\end{equation*}

\begin{lemma}
\label{Lemma1}The function $q\in C^{\infty }\left( \left[ -1,1\right]
^{n}\times \mathbb{R}_{\theta }^{n}\right) $ and the following estimate
holds:%
\begin{gather}
\forall (\alpha ,\beta )\in \mathbb{N}^{n}\times \mathbb{N}^{n},\text{ }%
\exists C_{\alpha ,\beta }>0;  \notag \\
\left\vert \partial _{x}^{\alpha }\partial _{\theta }^{\beta }q(x,\theta
)\right\vert \leq C_{\alpha ,\beta }\lambda ^{\left\vert \alpha \right\vert
-\left\vert \beta \right\vert }\left( \theta \right) \left\{ b\left( \lambda
^{-1}\left( \theta \right) \right) \right\} ^{-\left\vert \beta \right\vert }%
\text{ on }\left[ -1,1\right] ^{n}\times \mathbb{R}_{\theta }^{n}.
\label{2.10}
\end{gather}
\end{lemma}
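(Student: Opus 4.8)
The plan is to write $q=E\cdot\Pi$ with $E(x,\theta)=e^{-i\varphi(x)\psi(\theta)}$ and $\Pi(x,\theta)=\prod_{j=1}^{n}K(P(x)\theta_j)$, where $P(x)=b(|x|)|x|$, and to treat the two assertions separately. For smoothness the only delicate points are $x=0$ (where $|x|$ and $b(|x|)$ are irregular) and $\theta=0$. I would note that since $K\equiv1$ on $[-\delta,\delta]$ and $P(x)|\theta_j|\to0$ as $x\to0$ uniformly on bounded $\theta$-sets, every cutoff factor is identically $1$ on a neighbourhood of each point $(0,\theta_0)$; there $q$ coincides with $E$, which is $C^\infty$ because $\varphi$ is smooth and $\psi\in C^{\infty}(\mathbb{R}^n)$. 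Away from $x=0$ all building blocks are smooth, so $q\in C^{\infty}([-1,1]^n\times\mathbb{R}^n_\theta)$.

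For the estimate I would expand $\partial_x^\alpha\partial_\theta^\beta q$ by the Leibniz rule and the chain rule (Fa\`a di Bruno) into a finite sum of terms $(\partial_x^{\alpha'}\partial_\theta^{\beta'}E)\prod_j\partial_x^{\alpha^{(j)}}\partial_{\theta_j}^{m_j}K(P\theta_j)$, and bound the primitive factors: (i) $|\partial_x^a\varphi|\le C$ for $|a|\ge1$ and $|\varphi|\le C|x|$ by $(\ref{Es})$, together with $|\partial_\theta^b\psi|\le C\lambda^{1-|b|}(\theta)$ from the homogeneity of $\psi$; (ii) the auxiliary bound $|\partial_x^\nu P(x)|\le C_\nu|x|^{1-|\nu|}$, which follows from $|b^{(k)}(t)|\le C_kt^{-k}$ in $(\ref{C})$ and the fact that the derivatives of $|x|$ are homogeneous of degree $1-|\nu|$; (iii) the Schwartz bounds $|K^{(l)}(s)|\le C_{l,N}(1+|s|)^{-N}$, crucially noting that $K^{(l)}\equiv0$ on $[-\delta,\delta]$ for $l\ge1$, so every differentiated cutoff is supported in the set $P(x)|\theta_j|\ge\delta$.

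The heart of the proof is the interaction between the weight $P(x)=b(|x|)|x|$ and $\lambda(\theta)$, which I would govern by two facts. First, since $P$ is increasing with $P(s)=b(s)s$, one has $P^{-1}(\tau)\le\tau/b(\tau)$ for small $\tau$ (writing $s^{*}=P^{-1}(\tau)$, then $s^{*}=\tau/b(s^{*})\le\tau/b(\tau)$ because $s^{*}\ge\tau$); this yields
\begin{equation*}
\sup_{0<|x|\le1}|x|^{p}\bigl(1+P(x)\lambda(\theta)\bigr)^{-N}\le C_{p,N}\,\lambda^{-p}(\theta)\,b(\lambda^{-1}(\theta))^{-p},\qquad N>p,
\end{equation*}
the supremum being attained near $P(|x|)\sim\lambda^{-1}$. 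Second, on the support $P(x)|\theta_j|\ge\delta$ of any differentiated cutoff one has $|x|\ge P^{-1}(\delta/\lambda)\ge c\lambda^{-1}$ (from $P(s)\le b(1)s$), hence $|x|^{-1}\le C\lambda(\theta)$ there; this absorbs the negative powers of $|x|$ produced by $\partial_x^\nu P$ and accounts for the growth $\lambda^{|\alpha|}$.

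Combining these, each $\theta$-derivative contributes one factor $\lambda^{-1}b(\lambda^{-1})^{-1}$ and each $x$-derivative one factor $\lambda$: when a $\theta$-derivative lands on a cutoff it creates a factor $P=b(|x|)|x|$, and the collected $P^{|\beta|}(1+P\lambda)^{-N}$ is bounded by $\lambda^{-|\beta|}$; when it lands on the phase it creates $\varphi\sim|x|$, and $|x|^{|\beta|}(1+P\lambda)^{-N}$ is bounded through the first fact by $\lambda^{-|\beta|}b(\lambda^{-1})^{-|\beta|}$ — the binding case that forces the stated power of $b$. The main obstacle is exactly the bookkeeping in this last step: verifying, across the whole multivariate Leibniz/Fa\`a di Bruno expansion (including all mixed and non-extremal terms), that the powers of $|x|$, of $|\theta_j|$ and the Schwartz decay always recombine to precisely $\lambda^{|\alpha|-|\beta|}(\theta)\,b(\lambda^{-1}(\theta))^{-|\beta|}$, uniformly in $x$ and in particular on the tight region $|x|\sim\lambda^{-1}$ where the support constraint is sharp.
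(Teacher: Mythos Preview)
Your proposal is correct and follows essentially the same route as the paper: Leibniz expansion, the Schwartz bounds on $K$ together with the crucial vanishing of $K^{(l)}$ on $[-\delta,\delta]$ for $l\ge1$, the estimate $|\partial_x^\nu P|\le C_\nu|x|^{1-|\nu|}$ coming from $(\ref{C})$, and the two key size relations between $|x|$ and $\lambda(\theta)$ (your ``first'' and ``second'' facts are exactly the paper's inequalities $(\ref{2.14})$--$(\ref{2.15})$, just packaged as a supremum bound rather than as a pointwise inequality involving $\lambda(b(|x|)x\theta)$). The only cosmetic difference is that the paper reduces at the outset to $n=1$ following Kumano-Go, while you keep the product over $j$ and carry out the bookkeeping directly in $\mathbb{R}^n$; the substance of the argument---in particular the case split $|x|\gtrless\lambda^{-1}(\theta)$ that produces the factor $b(\lambda^{-1}(\theta))^{-|\beta|}$---is identical.
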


\begin{proof}
We adopt here the same strategy of Kumano-Go \cite{Ko} lemma 2.

Since $\psi \left( \theta \right) $ is homogeneous of degree $1$ $(\psi
\left( \theta \right) \neq 0),$ it will be sufficient to check the estimate
for $n=1$ on\textit{\ }$\left[ -1,1\right] \times \mathbb{R}_{\theta },$ i.e.%
\begin{gather}
\forall (j,k)\in \mathbb{N}\times \mathbb{N},\text{ }\exists C_{j,k}>0; 
\notag \\
\left\vert \partial _{x}^{j}\partial _{\theta }^{k}\left[ e^{-i\varphi
\left( x\right) \theta }K\left( b\left( \left\vert x\right\vert \right)
x\theta \right) \right] \right\vert \leq C_{j,k}\lambda ^{j-k}\left( \theta
\right) \left\{ b\left( \lambda ^{-1}\left( \theta \right) \right) \right\}
^{-k}.  \label{Es2}
\end{gather}

Since $K\left( t\right) \in \mathcal{S}(\mathbb{R})$ and $K^{\left( n\right)
}\left( t\right) =0$ on $\left[ -\delta ,\delta \right] ,$ $n\in \mathbb{N}%
^{\ast },$ then 
\begin{eqnarray}
\left\vert t^{l}K\left( t\right) \right\vert &\leq &C_{l},\text{ \ }\forall
l\in \mathbb{N}  \label{2.11} \\
\left\vert t^{l}K^{\left( n\right) }\left( t\right) \right\vert &\leq
&C_{l,n},\text{ }\forall n\in \mathbb{N}^{\ast },\forall l\in \mathbb{Z}%
\text{.}  \label{2.12}
\end{eqnarray}%
By Leibnitz's formula we have%
\begin{gather*}
\partial _{x}^{j}\partial _{\theta }^{k}\left[ e^{-i\varphi \left( x\right)
\theta }K\left( b\left( \left\vert x\right\vert \right) x\theta \right) %
\right] = \\
\sum\limits_{\substack{ j_{1}+j_{2}=j  \\ k_{1}+k_{2}=k}}%
C_{j_{1},j_{2},k_{1},k_{2}}^{j,k}\partial _{x}^{j_{1}}\left[ K^{\left(
k_{1}\right) }\left( b\left( \left\vert x\right\vert \right) x\theta \right)
\left( b\left( \left\vert x\right\vert \right) x\right) ^{k_{1}}\right]
\partial _{x}^{j_{2}}\left[ \left( -i\varphi \left( x\right) \right)
^{k_{2}}e^{-i\varphi \left( x\right) \theta }\right] ,
\end{gather*}%
where $C_{j_{1},j_{2},k_{1},k_{2}}^{j,k}=\frac{j!k!}{j_{1}!j_{2}!k_{1}!k_{2}!%
}.$ Then, by means of $\left( \ref{Es}\right) $ and $\left( \ref{C}\right) ,$
we have for constants $C_{j,k}^{\prime }$%
\begin{gather}
\left\vert \partial _{x}^{j}\partial _{\theta }^{k}\left[ e^{-i\varphi
\left( x\right) \theta }K\left( b\left( \left\vert x\right\vert \right)
x\theta \right) \right] \right\vert \leq C_{j,k}^{\prime }K_{j+k}\left(
b\left( \left\vert x\right\vert \right) x\theta \right)  \notag \\
\sum\limits_{\substack{ j_{1}+j_{2}=j  \\ k_{1}+k_{2}=k}}\max_{\substack{ %
s_{1}+s_{2}+s_{3}=j_{1}  \\ s_{3}\leq k_{1}}}\left\{ \left\vert \theta
\right\vert ^{s_{1}}\left\vert x\right\vert ^{-s_{2}}\left( b\left(
\left\vert x\right\vert \right) x\right) ^{k_{1}-s_{3}}\right\} \max 
_{\substack{ l_{1}+l_{2}=j_{2}  \\ l_{2}\leq k_{2}}}\left\{ \left\vert
\theta \right\vert ^{l_{1}}\left\vert x\right\vert ^{k_{2}-l_{2}}\right\} ,
\label{2.13}
\end{gather}%
where $K_{p}\left( t\right) ,$ $p\in \mathbb{N}$ are defined by%
\begin{equation*}
K_{0}\left( t\right) =\left\vert K\left( t\right) \right\vert ,\text{ }%
K_{p}\left( t\right) =\max_{1\leq p^{\prime }\leq p}\left\vert K^{\left(
p^{\prime }\right) }\left( t\right) \right\vert ,\text{ }p\in \mathbb{N}%
^{\ast }.
\end{equation*}%
Writing $\left\vert b\left( \left\vert x\right\vert \right) x\right\vert
=\left\vert b\left( \left\vert x\right\vert \right) x\theta \right\vert
\left\vert \theta \right\vert ^{-1}$ and $\left\vert x\right\vert
^{-1}=\left\vert b\left( \left\vert x\right\vert \right) x\theta \right\vert
^{-1}b\left( \left\vert x\right\vert \right) \left\vert \theta \right\vert ,$
then there exists a constant $C,$%
\begin{equation}
\left\{ 
\begin{array}{l}
\left\vert b\left( \left\vert x\right\vert \right) x\right\vert \leq
C\lambda \left( b\left( \left\vert x\right\vert \right) x\theta \right)
\lambda ^{-1}\left( \theta \right) , \\ 
\left\vert x\right\vert ^{-1}\leq C\left\vert b\left( \left\vert
x\right\vert \right) x\theta \right\vert ^{-1}\lambda \left( \theta \right)%
\end{array}%
\right. \text{ on }\left[ -1,1\right] \times \mathbb{R}_{\theta }.
\label{2.14}
\end{equation}%
We have $b^{-1}\left( \left\vert x\right\vert \right) \leq b^{-1}\left(
\lambda ^{-1}\left( \theta \right) \right) $ when $\left\vert x\right\vert
\geq \lambda ^{-1}\left( \theta \right) $ (because $b$ is increasing). Then,%
\begin{gather*}
\left\vert x\right\vert =\left\vert b\left( \left\vert x\right\vert \right)
x\theta \right\vert \left( b\left( \left\vert x\right\vert \right)
\left\vert \theta \right\vert \right) ^{-1}\leq \left\vert b\left(
\left\vert x\right\vert \right) x\theta \right\vert \left\vert \theta
\right\vert ^{-1}b^{-1}\left( \lambda ^{-1}\left( \theta \right) \right) \\
\leq C\lambda \left( b\left( \left\vert x\right\vert \right) x\theta \right)
\lambda ^{-1}\left( \theta \right) b^{-1}\left( \lambda ^{-1}\left( \theta
\right) \right) .
\end{gather*}%
Bearing in mind the other case when $\left\vert x\right\vert \leq $ $\lambda
^{-1}\left( \theta \right) ,$ we obtain for a constant $\widetilde{C}$ 
\begin{equation}
\left\vert x\right\vert \leq \widetilde{C}\lambda \left( b\left( \left\vert
x\right\vert \right) x\theta \right) \lambda ^{-1}\left( \theta \right)
b^{-1}\left( \lambda ^{-1}\left( \theta \right) \right) .  \label{2.15}
\end{equation}%
Finally from $\left( \ref{2.11}\right) $ to $\left( \ref{2.15}\right) $, we
obtain $\left( \ref{Es2}\right) .$
\end{proof}

\begin{lemma}
\label{Lemma2}\textit{For any continuous function }$b_{0}\left( t\right) $ 
\textit{on} $\left[ 1,+\infty \right[ $ \textit{such that}%
\begin{equation}
b_{0}\left( t\right) >0,\text{ \ \ }\lim_{t\rightarrow +\infty }b_{0}\left(
t\right) =+\infty ,  \label{2.16}
\end{equation}%
\textit{then there exists a continuous function }$b\left( t\right) $ \textit{%
on }$\left[ 0,1\right] $\textit{\ witch satisfies conditions }$\left( \ref{C}%
\right) $ \textit{such that we have on }$\left[ -1,1\right] ^{n}\times 
\mathbb{R}_{\theta }^{n}$%
\begin{equation}
\left\vert \partial _{x}^{\alpha }\partial _{\theta }^{\beta }q(x,\theta
)\right\vert \leq C_{\alpha ,\beta }\lambda ^{\left\vert \alpha \right\vert
-\left\vert \beta \right\vert }\left( \theta \right) \left\{ b_{0}\left(
\lambda \left( \theta \right) \right) \right\} ^{\left\vert \beta
\right\vert },\text{ }\alpha ,\beta \in \mathbb{N}^{n}\text{.}  \label{2.17}
\end{equation}
\end{lemma}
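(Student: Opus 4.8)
The plan is to deduce \eqref{2.17} from the estimate \eqref{2.10} already established in Lemma \ref{Lemma1}, by choosing the function $b$ entering the definition of $q$ appropriately in terms of the given $b_0$. Comparing \eqref{2.10} with \eqref{2.17}, the two right-hand sides differ only in their last factor, so it suffices to arrange that
\[
\left\{ b\left( \lambda^{-1}(\theta)\right)\right\}^{-\left\vert \beta\right\vert}\le \left\{ b_0\left(\lambda(\theta)\right)\right\}^{\left\vert \beta\right\vert},\qquad \beta\in\mathbb{N}^n.
\]
Since $\left\vert\beta\right\vert$ is arbitrary, this will follow, upon raising to the power $\left\vert\beta\right\vert$, from the single pointwise inequality $b\left(\lambda^{-1}(\theta)\right)^{-1}\le b_0\left(\lambda(\theta)\right)$.

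To secure the latter, I would write $s=\lambda(\theta)=1+\left\vert\theta\right\vert\ge 1$ and $t=\lambda^{-1}(\theta)=1/s\in\,]0,1]$, so that the desired inequality becomes $1/b(t)\le b_0(1/t)$, i.e.\ $b(t)\ge 1/b_0(1/t)$. This dictates the auxiliary function
\[
f_0(t)=\frac{1}{b_0(1/t)}\quad (t\in\,]0,1]),\qquad f_0(0)=0 .
\]
The key verification is that $f_0$ fulfils the hypotheses of Kumano-Go's Lemma \ref{LemmaKumano}: it is continuous and strictly positive on $]0,1]$ because $b_0$ is continuous and positive there, while it extends continuously to the origin with $f_0(0)=0$ precisely because $\lim_{t\to 0^+}b_0(1/t)=+\infty$ by the growth assumption \eqref{2.16}. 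This continuous extension at $t=0$ is the only delicate point, and it is exactly where hypothesis \eqref{2.16} is indispensable; everything else is bookkeeping.

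Lemma \ref{LemmaKumano} then furnishes a function $b$ on $[0,1]$ satisfying the conditions \eqref{C} together with $f_0(t)\le b(t)$ on $[0,1]$. I would use this particular $b$ in the definition of $q$, so that Lemma \ref{Lemma1} applies and gives \eqref{2.10}. Since $b$ is increasing with $b(0)=0$, it is positive on $]0,1]$, whence $f_0(t)\le b(t)$ is equivalent to $1/b(t)\le b_0(1/t)$; substituting $t=\lambda^{-1}(\theta)$, hence $1/t=\lambda(\theta)$, yields $b\left(\lambda^{-1}(\theta)\right)^{-1}\le b_0\left(\lambda(\theta)\right)$. Raising this to the power $\left\vert\beta\right\vert$ and inserting it into \eqref{2.10} produces \eqref{2.17} with the same constants $C_{\alpha,\beta}$. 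The logical order matters: $b_0$ is prescribed first, then $f_0$ and hence $b$ are built from it, and only afterwards is $q$ defined with this $b$, so that \eqref{2.10} and \eqref{2.17} refer to one and the same operator.
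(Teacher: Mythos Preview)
Your argument is correct and is essentially identical to the paper's own proof: you define $f_{0}(t)=\{b_{0}(1/t)\}^{-1}$ with $f_{0}(0)=0$, invoke Lemma~\ref{LemmaKumano} to obtain $b$ satisfying \eqref{C} and $f_{0}\le b$, and then use $\{b(\lambda^{-1}(\theta))\}^{-1}\le\{f_{0}(\lambda^{-1}(\theta))\}^{-1}=b_{0}(\lambda(\theta))$ to pass from \eqref{2.10} to \eqref{2.17}. Your write-up is in fact more careful than the paper's, spelling out the continuity of $f_{0}$ at $0$ and the logical order in which $b$ must be chosen before $q$ is defined.
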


\begin{proof}
Setting%
\begin{equation*}
\left\{ 
\begin{array}{l}
f_{0}\left( t\right) =\left\{ b_{0}\left( t^{-1}\right) \right\} ^{-1}\text{
on }\left] 0,1\right] \\ 
f_{0}\left( 0\right) =0,%
\end{array}%
\right.
\end{equation*}%
then $f_{0}$ is a continuous function on $\left[ 0,1\right] $ which verifies
condition $\left( \ref{2.1}\right) .$ Then, by lemma \ref{LemmaKumano},
there exists a continuous function $b\left( t\right) $ which satisfies $%
\left( \ref{C}\right) $. Noting that%
\begin{equation*}
\left\{ b\left( \lambda ^{-1}\left( \theta \right) \right) \right\}
^{-1}\leq \left\{ f_{0}\left( \lambda ^{-1}\left( \theta \right) \right)
\right\} ^{-1}=b_{0}\left( \lambda \left( \theta \right) \right)
\end{equation*}%
this gives $\left( \ref{2.17}\right) .$
\end{proof}

\begin{lemma}
\label{Lemma3}Let $\left\{ b_{l}\left( t\right) \right\} _{l\in \mathbb{N}%
^{\ast }}$ be a sequence of continuous functions on $\left[ 1,+\infty \right[
$ \ which satisfy $\left( \ref{2.16}\right) .$ Then, there exists a
continuous function $b_{0}\left( t\right) $ verifiying $\left( \ref{2.16}%
\right) ,$ such that, for any $l_{0},$%
\begin{equation*}
b_{l}\left( t\right) \geq b_{0}\left( t\right) \text{ on }\left[
t_{l_{0}},+\infty \right[ ,\text{ }l=1,...,l_{0}\text{ }
\end{equation*}
\end{lemma}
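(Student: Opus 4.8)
The plan is to reduce the statement to controlling the \emph{decreasing} family of partial minima of the $b_l$, and then to build $b_0$ by a diagonal, piecewise-linear interpolation between carefully chosen thresholds. First I would set $m_l(t)=\min\left(b_1(t),\dots,b_l(t)\right)$ for each $l\in\mathbb{N}^{\ast}$. Since a finite minimum of continuous functions is continuous, each $m_l$ is continuous and strictly positive on $[1,+\infty[$, and because each $b_j$ tends to $+\infty$ the finite minimum also satisfies $\lim_{t\to+\infty}m_l(t)=+\infty$; thus every $m_l$ obeys $(\ref{2.16})$. The key structural observations are that the family is non-increasing in the index, $m_{l+1}(t)\le m_l(t)$, and that the desired conclusion $b_l(t)\ge b_0(t)$ for $l=1,\dots,l_0$ is equivalent to the single inequality $b_0(t)\le m_{l_0}(t)$.

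Next I would use $\lim_{t\to+\infty}m_l(t)=+\infty$ to select a strictly increasing sequence $t_1<t_2<\cdots$ with $t_l\to+\infty$ such that $m_l(t)\ge l+1$ for every $t\ge t_l$. This is possible because for each fixed $l$ there is a half-line on which $m_l\ge l+1$, and one may always enlarge the threshold to guarantee $t_l\ge t_{l-1}+1$. I would then define $b_0$ to be the continuous piecewise-linear function equal to $1$ on $[1,t_1]$, equal to $l$ at each node $t_l$, and linear on each interval $[t_l,t_{l+1}]$. By construction $b_0$ is continuous, strictly positive, and increasing with $b_0(t_l)=l\to+\infty$, so $b_0$ itself verifies $(\ref{2.16})$.

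It then remains to check the domination. Fixing $l_0$ and $t\ge t_{l_0}$, I would let $[t_L,t_{L+1}]$ be the node interval containing $t$, so that $L\ge l_0$. On this interval the values of $b_0$ lie between $L$ and $L+1$, hence $b_0(t)\le L+1\le m_L(t)$ by the choice of $t_L$; and since the partial minima decrease in the index, $m_L(t)\le m_{l_0}(t)\le b_l(t)$ for every $l\le l_0$. This yields $b_0(t)\le b_l(t)$ on $[t_{l_0},+\infty[$ for $l=1,\dots,l_0$, which is exactly the claim.

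I expect the only real subtlety to be the tension between two opposing demands: $b_0$ must escape to $+\infty$, yet it must eventually sit below every member of a family $m_l$ that itself \emph{decreases} with $l$. The diagonal choice of thresholds with the margin $m_l\ge l+1$ is precisely what reconciles them, allowing $b_0$ to climb one unit per interval while never overtaking the relevant partial minimum; the piecewise-linear interpolation then secures continuity at the nodes with no further estimates.
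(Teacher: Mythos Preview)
Your argument is correct. The reduction to the partial minima $m_l=\min(b_1,\dots,b_l)$ is the natural move, and your diagonal choice of thresholds $t_l$ with the margin $m_l(t)\ge l+1$ for $t\ge t_l$, followed by piecewise-linear interpolation, cleanly produces a continuous $b_0$ satisfying $(\ref{2.16})$ and the eventual domination $b_0\le m_{l_0}$ on $[t_{l_0},+\infty[$. The chain $b_0(t)\le L+1\le m_L(t)\le m_{l_0}(t)\le b_l(t)$ for $t\in[t_L,t_{L+1}]$ with $L\ge l_0$ is sound, and the endpoint cases and continuity at the nodes are handled by your construction.

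As for comparison: the paper states this lemma without proof, presumably regarding it as elementary or deferring to Kumano-Go's original argument in \cite{Ko}. Your proof supplies exactly the kind of standard diagonalization one would expect here, and nothing in the paper's subsequent use of the lemma (only the existence of such a $b_0$ dominating finitely many $b_l$ on a tail) requires more than what you have shown.
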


Finally, our but is to give an unbounded Fourier integral operator of the
form $\left( \ref{1.3}\right) $ with symbol in $\bigcap\limits_{0<\rho
<1}S_{\rho ,1}^{0}\left( \mathbb{R}_{x}^{n}\times \mathbb{R}_{\theta
}^{n}\right) .$

\begin{theorem}
There exist a Fourier integral operator $F$ of the form $\left( \ref{1.3}%
\right) ,$ with symbol $a\in \bigcap\limits_{0<\rho <1}S_{\rho ,1}^{0}\left( 
\mathbb{R}_{x}^{n}\times \mathbb{R}_{\theta }^{n}\right) ,$ which cannot be
extended to be a bounded operator on $L^{2}\left( \mathbb{R}^{n}\right) .$
\end{theorem}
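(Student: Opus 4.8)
The plan is to take as symbol a cut-off of the function $q$ introduced just before Lemma \ref{Lemma1}, to prove it lies in $\bigcap_{0<\rho<1}S^0_{\rho,1}$ by feeding a carefully chosen sequence into Lemmas \ref{Lemma2} and \ref{Lemma3}, and then to recognise the resulting operator $F$, for small $x$, as an operator of the type governed by Theorem \ref{Theorem1}. Concretely, fix $\varphi,\psi$ as permitted in the corollary of Section 2 with $\varphi$ also satisfying \eqref{Es}, fix $K$ as above, and choose a cut-off $\chi\in C_0^\infty(\mathbb{R}^n)$ with $\chi\equiv1$ on a neighbourhood of $[0,\beta]^n$ and $\operatorname{supp}\chi\subset\{|x|<1\}$, where $\beta<1/\sqrt n$ so that $|x|<1$ on $]0,\beta[^n$. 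Set $a(x,\theta)=\chi(x)\,q(x,\theta)$, the function $b$ entering $q$ being chosen below. Since $a$ has order $0$ it is bounded, so $F$ is well defined on $\mathcal S(\mathbb{R}^n)$ by \eqref{1.3}; and since $\chi$ is compactly supported with bounded derivatives, the membership $a\in S^0_{\rho,1}$ will reduce to the estimate for $q$ on $[-1,1]^n\times\mathbb{R}^n_\theta$.

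The crux is to produce a single $b$ making $q$ belong to $S^0_{\rho,1}$ for every $\rho\in(0,1)$ simultaneously; this is the step I expect to be the main obstacle. I would take $\rho_l\uparrow1$ (for instance $\rho_l=l/(l+1)$) and put $b_l(t)=t^{\,1-\rho_l}$: each $b_l$ is continuous, positive and tends to $+\infty$, so it obeys \eqref{2.16}. Lemma \ref{Lemma3} then yields one $b_0$ satisfying \eqref{2.16} with $b_0\le b_l$ on $[t_{l_0},+\infty[$ for $l\le l_0$, and Lemma \ref{Lemma2} turns $b_0$ into a $b$ satisfying \eqref{C} for which $|\partial_x^\alpha\partial_\theta^\beta q|\le C_{\alpha,\beta}\lambda^{|\alpha|-|\beta|}(\theta)\{b_0(\lambda(\theta))\}^{|\beta|}$. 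Given $\rho<1$, pick $l$ with $\rho_l\ge\rho$; then $b_0(\lambda)\le b_l(\lambda)=\lambda^{1-\rho_l}\le\lambda^{1-\rho}$ for $\lambda$ large, so the bound becomes $C_{\alpha,\beta}\lambda^{|\alpha|-\rho|\beta|}(\theta)$, while on any bounded $\theta$-range the estimate is automatic by continuity of the derivatives on the compact set $[-1,1]^n\times\{|\theta|\le M\}$. Hence $q$, and therefore $a$, lies in $S^0_{\rho,1}$ for every $\rho<1$.

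It remains to identify $F$ near the origin. For $x\in]0,\beta[^n$ one has $\chi(x)=1$, so $a(x,\theta)=q(x,\theta)$ and the exponential $e^{i\varphi(x)\psi(\theta)}$ in \eqref{1.3} cancels the factor $e^{-i\varphi(x)\psi(\theta)}$ in $q$, giving
\begin{equation*}
(Fu)(x)=\int_{\mathbb{R}^n}\prod_{j=1}^n K\big(g(x)\theta_j\big)\,\mathcal{F}u(\theta)\,d\theta,\qquad g(x)=b(|x|)\,|x|.
\end{equation*}
Writing $\Psi(\theta)=\prod_{j=1}^nK(\theta_j)\in\mathcal S(\mathbb{R}^n)$ and using Fourier inversion, the right-hand side equals $\int_{\mathbb{R}^n}u(g(x)z)\,\mathcal F\Psi(z)\,dz$, which is exactly the operator $A$ of Theorem \ref{Theorem1}; moreover $\Psi(0)=K(0)^n=1$. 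Note that this cancellation makes the construction insensitive to the particular $\varphi,\psi$.

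Finally I would verify the hypotheses \eqref{2.6} for $g(x)=b(|x|)|x|$: it is continuous and strictly positive on $]0,\beta[^n$; since $b(0)=0$ we have $g(x)/|x|=b(|x|)\to0$ as $|x|\to0^+$; and as $b$ is increasing (by \eqref{C}) and $|x|$ is increasing in each $x_i$ on $]0,\beta[^n$, the product $g$ is increasing in each variable. Theorem \ref{Theorem1} then shows that $A$ is not $L^2$-bounded. Since $F$ coincides with $A$ on $]0,\beta[^n$, which is precisely the region where the lower bound \eqref{2.9} is produced, any $L^2$-bound for $F$ would yield one for $A$; this contradiction proves that $F$ cannot be extended to a bounded operator on $L^2(\mathbb{R}^n)$.
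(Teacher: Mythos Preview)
Your proof is correct and follows essentially the same route as the paper: define the symbol as a cut-off times $q$, use Lemmas~\ref{Lemma2}--\ref{Lemma3} to obtain one $b$ making $a\in\bigcap_{0<\rho<1}S^0_{\rho,1}$, then cancel the phase and reduce to Theorem~\ref{Theorem1}. The only differences are cosmetic: the paper uses the product cut-off $\prod_j\phi(x_j)$ rather than your $\chi(x)$, and feeds the iterated logarithms $b_l(t)=\underbrace{\log\cdots\log}_{l}(C_l+t)$ into Lemma~\ref{Lemma3} instead of your powers $b_l(t)=t^{1-\rho_l}$; both choices satisfy \eqref{2.16} and yield the required symbol estimates, your choice being slightly more direct for the stated goal.
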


\begin{proof}
Let $\phi \left( s\right) $ be a-$C_{0}^{\infty }\left( \mathbb{R}\right) $
function such that%
\begin{equation*}
\left\{ 
\begin{array}{l}
\phi \left( s\right) =1~\ \text{on }\left[ -\beta ,\beta \right] \text{ \ }%
(\beta <1) \\ 
\text{supp}\phi \subset \left[ -1,1\right] .%
\end{array}%
\right.
\end{equation*}%
Define a $C^{\infty }$-symbol $a\left( x,\theta \right) $ by%
\begin{equation*}
a\left( x,\theta \right) =e^{-i\varphi \left( x\right) \psi \left( \theta
\right) }\prod\limits_{j=1}^{n}\phi \left( x_{j}\right) K\left( b\left(
\left\vert x\right\vert \right) \left\vert x\right\vert \theta _{j}\right) 
\text{ in }\mathbb{R}_{x}^{n}\times \mathbb{R}_{\theta }^{n}
\end{equation*}%
where $K\left( t\right) $ and $b\left( t\right) $ are the functions of lemma %
\ref{Lemma1}. Let $b_{l}\left( t\right) =\overset{l}{\overbrace{\log ...\log 
}}\left( C_{l}+t\right) $ defined on $\left[ 1,+\infty \right[ $ and $C_{l}$
some large constant, then by lemmas \ref{Lemma2} and \ref{Lemma3} we have $%
,\forall (\alpha ,\gamma )\in \mathbb{N}^{n}\times \mathbb{N}^{n},$ $\forall
l\in \mathbb{N}^{\ast }$ 
\begin{equation}
\left\vert \partial _{x}^{\alpha }\partial _{\theta }^{\gamma }a\left(
x,\theta \right) \right\vert \leq C_{\alpha ,\gamma ,l}\lambda ^{\left\vert
\alpha \right\vert -\left\vert \gamma \right\vert }\left( \theta \right)
\left\{ b_{l}\left( \lambda \left( \theta \right) \right) \right\}
^{\left\vert \gamma \right\vert },  \label{2.18}
\end{equation}%
$C_{\alpha ,\gamma ,l}$ are constants, so that $a\left( x,\theta \right) \in
\bigcap\limits_{0<\rho <1}S_{\rho ,1}^{0}\left( \mathbb{R}_{x}^{n}\times 
\mathbb{R}_{\theta }^{n}\right) .$ Furthermore the corresponding Fourier
integral $F$ is%
\begin{eqnarray}
\left( Fu\right) \left( x\right) &=&\int_{\mathbb{R}_{\theta
}^{n}}e^{i\varphi \left( x\right) \psi \left( \theta \right) }a\left(
x,\theta \right) \mathcal{F}u\left( \theta \right) d\theta  \notag \\
&=&\prod\limits_{j=1}^{n}\phi \left( x_{j}\right) \int_{\mathbb{R}_{\theta
}^{n}}\prod\limits_{j=1}^{n}K\left( b\left( \left\vert x\right\vert \right)
\left\vert x\right\vert \theta _{j}\right) \mathcal{F}u\left( \theta \right)
d\theta ,\text{ \ }u\in \mathcal{S}(\mathbb{R}^{n}).  \label{2.19}
\end{eqnarray}%
We consider $\left( Fu\right) \left( x\right) $ in $\left] 0,\beta \right]
^{n}.$ Then, using an adequate change of variable in the integral $\left( %
\ref{2.19}\right) $, we have%
\begin{equation*}
\left( Fu\right) \left( x\right) =\int_{\mathbb{R}_{z}^{n}}u\left( b\left(
\left\vert x\right\vert \right) \left\vert x\right\vert z\right)
\prod\limits_{j=1}^{n}\mathcal{F}K\left( z_{j}\right) dz
\end{equation*}%
which has the form of $A$ in theorem \ref{Theorem1}. In addition the
function $g\left( x\right) =b\left( \left\vert x\right\vert \right)
\left\vert x\right\vert $ satisfies\textit{\ }$\left( \ref{2.6}\right) .$
Consequently the operator $F$ cannot be extended as a bounded operator on $%
L^{2}\left( \mathbb{R}^{n}\right) .$
\end{proof}

\end{document}